\newtheorem{lemma}{Lemma}[section]
\newtheorem{theorem}[lemma]{Theorem}
\newtheorem{proposition}[lemma]{Proposition}
\newtheorem{corollary}[lemma]{Corollary}
\theoremstyle{definition}
\newtheorem{definition}[lemma]{Definition}
\numberwithin{equation}{section}
\numberwithin{figure}{section}
\newcommand{\Lset}{\mathcal{L}}
\begin{document}

\title{\huge On polynomial solutions of PDE}         
\author{Anna R. Gharibyan, Hakop A. Hakopian}        
\date{}          

\maketitle

\begin{abstract}
In this paper we prove that the PDE $p(D)f=q,$ where $p$ and $q$ are multivariate  polynomials, has a solution in the space of polynomials of total degree not exceeding ${n+s},$ where $n$ is the degree of $q$ and $s$ is the zero order of $O=(0,\ldots,0)$ for $p.$
\end{abstract}
{\bf Key words:} PDE with constant coefficients, bivariate polynomial, $s$-fold zero of a polynomial.

{\bf Mathematics Subject Classification (2010):} 35E20.

\section{Introduction\label{sec:intro}} Let us start with the bivariate case. Denote by $\Pi_n$ the space of biariate polynomials of total degree at most $n:$
$$\Pi_n = \{\sum_{i+j \leq n} a_{ij}x^iy^j\},\quad \dim\Pi_n=1/2(n+1)(n+2).$$
Thus $\Pi_0=\{c: c=const.\}, \dim\Pi_0=1$ and $\Pi_{-1}=\{0\}, \dim\Pi_{-1}=0.$

We set $$\Pi=\cup_{n\ge 0}\Pi_n.$$

Denote also by $\overline\Pi_n$ the space homogeneous polynomials of total degree $n:$
$$\overline\Pi_n = \{\sum_{i+j = n} a_{ij}x^iy^j\},\quad \dim\overline\Pi_n=n+1.$$

For a polynomial $p$ denote by $p(D)$ the respective differentiation operator:
$$p(D)=p(\frac{\partial}{\partial x},\frac{\partial}{\partial y}).$$

\begin{definition}
	Suppose that $p \in \Pi_n,\ p(x, y)=\sum_{i+j \leq n} a_{ij}x^iy^j$. Then  the $k$th homogeneous layer of $p$ we denote by
	$$p_{(k)} = \sum_{i+j = k} a_{ij}x^iy^j.$$
	Denote also by $p^\uparrow$ the upper nonzero homogeneous layer of $p$ and by $p_\downarrow$ the lower nonzero homogeneous layer of $p.$
\end{definition}

\begin{definition}
	The point  $O=(0, 0)$ is called an $m$-fold zero for $p$ if the lower homogeneous layer of $p(x)$ is the $m$th one.
	A point  $a\in \mathbb {C}^2$ is called an $m$-fold zero for $p$ if the lower homogeneous layer of $p(x+a)$ is the $m$th one.
\end{definition}

Let $V$ and $W$ be finite dimensional linear spaces and $\Lset$ be a linear operator $\Lset:V\rightarrow W.$ In the sequel we will use the following formula
\begin{equation}\label{ban}\dim(Im\Lset) = \dim V - \dim(ker \Lset).\end{equation}

Let us bring the following result:
\begin{theorem}[ Th. 5, \cite{H03}; Th. 6, \cite{HT02}]\label{tm:3}
Assume that a polynomial $f_0$ is a solution of the PDE $p(D)f=0.$ Then the partial derivatives of $f_0$ are solutions too.
\end{theorem}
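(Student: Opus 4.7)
The approach I would take relies on the simple but essential fact that any polynomial differential operator with constant coefficients commutes with every partial derivative. Write $p(x,y)=\sum_{i+j\le n} a_{ij}x^iy^j$, so that $p(D)=\sum_{i+j\le n} a_{ij}\partial_x^i\partial_y^j$. Since mixed partial derivatives of polynomials commute (Clairaut's theorem, which holds trivially on $\Pi$), each summand commutes with $\partial_x$ and with $\partial_y$, and hence so does the whole operator $p(D)$. In symbols, $\partial_x\circ p(D)=p(D)\circ \partial_x$ and similarly with $\partial_y$.

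Given this, the argument is a one-line computation. Suppose $f_0\in\Pi$ satisfies $p(D)f_0=0$. Then
\begin{equation*}
 p(D)\Bigl(\frac{\partial f_0}{\partial x}\Bigr)=\frac{\partial}{\partial x}\bigl(p(D)f_0\bigr)=\frac{\partial}{\partial x}(0)=0,
\end{equation*}
and an identical computation with $\partial/\partial y$ in place of $\partial/\partial x$ handles the other partial derivative. Thus both $\partial f_0/\partial x$ and $\partial f_0/\partial y$ again lie in the kernel of $p(D)$, which is the statement to be proved. By induction, all higher-order partial derivatives of $f_0$ are solutions as well, though this is not explicitly asked.

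Honestly, there is no real obstacle here: the whole content of the theorem is the commutativity of $p(D)$ with $\partial_x,\partial_y$, which in turn is immediate from the constancy of the coefficients $a_{ij}$. The only point where one might wish to be careful is the justification that $\partial_x$ and $\partial_y$ commute with each monomial $\partial_x^i\partial_y^j$; since we are working in the polynomial ring $\Pi$ this is automatic, so the proof is genuinely short.
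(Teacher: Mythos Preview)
Your argument is correct: the commutation of $p(D)$ with $\partial_x$ and $\partial_y$ is exactly what is needed, and the computation you give is complete. Note, however, that the paper does not supply its own proof of this theorem; it is quoted as a result from \cite{H03} and \cite{HT02} and used as input for Corollary~\ref{cor:1}, so there is no in-paper proof to compare against.
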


\begin{corollary}\label{cor:1}
	Suppose that $p(x, y)=\sum_{i+j \leq n} a_{ij}x^iy^j$: The homogeneous PDE $p(D)f=0$ has nonzero polynomial solution if and only if $f=1$ is its solution, i.e.,  $p(D)1=a_{00}=0$:
\end{corollary}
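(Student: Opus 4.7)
The reverse implication is trivial, since if $p(D)1=a_{00}=0$ then $f\equiv 1$ is itself a nonzero polynomial solution. For the forward implication, my plan is to use Theorem \ref{tm:3} to reduce an arbitrary nonzero polynomial solution to the constant solution $f\equiv 1$, and then to read off $a_{00}=0$ by evaluating $p(D)1$ directly.

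Concretely, I would start with a nonzero polynomial solution $f_0$ of total degree $d$, and fix a monomial $c_{ab}x^ay^b$ in the upper layer $f_0^\uparrow$ with $a+b=d$ and $c_{ab}\neq 0$. Then I would apply the iterated partial derivative $\partial^{a+b}/(\partial x^a\,\partial y^b)$ to $f_0$. Every other monomial $x^iy^j$ of $f_0$ satisfies $i+j\leq d=a+b$ with $(i,j)\neq(a,b)$, so necessarily $i<a$ or $j<b$, and the operator annihilates it. Only the term $c_{ab}x^ay^b$ survives, producing the nonzero constant $a!\,b!\,c_{ab}$. By Theorem \ref{tm:3}, iterated partial derivatives of a solution are again solutions, so this nonzero constant solves $p(D)f=0$; rescaling shows $f\equiv 1$ is a solution.

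It then remains to compute $p(D)1$ directly. Since $\bigl(\partial^{i+j}/\partial x^i\,\partial y^j\bigr)1=0$ whenever $i+j>0$, only the constant term of $p$ contributes, giving $p(D)1=a_{00}$. Combined with the previous step, this forces $a_{00}=0$, which is exactly the claimed equivalence.

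I do not anticipate any real obstacle here; the only point requiring care is selecting the multi-index $(a,b)$ from the top homogeneous layer $f_0^\uparrow$, which guarantees that the iterated derivative produces a genuine nonzero constant rather than a nonconstant polynomial of lower degree.
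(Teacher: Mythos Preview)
Your argument is correct and is exactly the intended derivation of the corollary from Theorem~\ref{tm:3}: repeatedly differentiate a nonzero polynomial solution along a top-degree monomial to obtain a nonzero constant solution, then observe $p(D)1=a_{00}$. The paper does not spell this out, but your proof is precisely what the word ``Corollary'' is pointing to.
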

In the sequel we will use the following
\begin{theorem}[Th. 2.1, \cite{N}]\label{thm:2}
	Suppose that  the origin $O=(0,0)$ is an $s$-fold zero of $p \in \Pi_n.$ Then the homogeneous PDE $p(D)f=0$ has exactly $D_k$ linearly independent solutions in $\Pi_k,$ where $D_k$ is the $k$th partial sum of the following number series
	$$  \sum_{i=0}^{\infty} d_i = 1 + 2 +\cdots+s +\cdots+ s + \cdots .$$
\end{theorem}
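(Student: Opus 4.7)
The plan is to apply the rank--nullity identity \eqref{ban} to the linear map $p(D):\Pi_k\to\Pi_{k-s}$, adopting the natural extension $\Pi_j=\{0\}$ for $j<0$. This map is well defined because, by the $s$-fold vanishing hypothesis, every monomial appearing in $p$ has total degree at least $s$, so the associated partial derivative lowers total degree by at least $s$. When $k<s$, each term of $p(D)$ differentiates strictly more times than the total degree of any $f\in\Pi_k$, and therefore $p(D)$ annihilates $\Pi_k$ entirely; in this range a direct count gives $D_k=1+2+\cdots+(k+1)=\dim\Pi_k$, matching the claim. When $k\ge s$, an elementary simplification yields $\dim\Pi_k-\dim\Pi_{k-s}=D_k$, so via \eqref{ban} it will suffice to show that $p(D):\Pi_k\to\Pi_{k-s}$ is \emph{surjective}.

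For the surjectivity I would first treat the homogeneous core. Let $q:=p_{\downarrow}=p_{(s)}$ be the lowest nonzero layer of $p$. Over $\mathbb{C}$ every bivariate form factors into linear forms, so $q(x,y)=\prod_{i=1}^{s}(\alpha_ix+\beta_iy)$ and hence $q(D)=\prod_{i=1}^{s}(\alpha_i\partial_x+\beta_i\partial_y)$ is a composition of nonzero directional derivatives. Each such factor, after a linear change of coordinates, becomes $\partial_x$, which is manifestly surjective from $\overline\Pi_j$ onto $\overline\Pi_{j-1}$ for $j\ge 1$. Composing, $q(D):\overline\Pi_j\to\overline\Pi_{j-s}$ is surjective whenever $j\ge s$.

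To lift this graded surjectivity to the filtered map $p(D):\Pi_k\to\Pi_{k-s}$, I would decompose any $g\in\Pi_{k-s}$ into homogeneous layers $g=g_{k-s}+\cdots+g_0$ and construct $f=f_k+\cdots+f_0$ with $f_j\in\overline\Pi_j$ from the top down. Writing $p(D)=\sum_{l=s}^{n}p_{(l)}(D)$ and collecting terms by degree, the degree $k-s-m$ part of $p(D)f$ equals $q(D)f_{k-m}+\sum_{l>s}p_{(l)}(D)f_{k-m+l-s}$, in which the second sum involves only the layers $f_{k-m+1},\ldots,f_k$ already determined at previous steps. Setting this equal to $g_{k-s-m}$ and invoking the surjectivity of $q(D):\overline\Pi_{k-m}\to\overline\Pi_{k-s-m}$ produces $f_{k-m}$, and iterating over $m=0,1,\ldots,k-s$ yields the desired preimage. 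The main obstacle, and the only genuinely nontrivial ingredient, is the homogeneous surjectivity of $q(D)$; this rests decisively on the factorization of a bivariate form into linear factors over $\mathbb{C}$, while the rest of the argument is bookkeeping combined with \eqref{ban}.
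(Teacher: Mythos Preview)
The paper does not supply a proof of Theorem~\ref{thm:2}; it is quoted from \cite{N} and used as an input. Your argument is correct, and it is in fact the reverse of the paper's logical flow: the paper \emph{uses} Theorem~\ref{thm:2} to obtain the surjectivity of $p(D)$ on homogeneous layers (first proof of Proposition~\ref{prp:4}), whereas you establish the surjectivity of $p(D):\Pi_k\to\Pi_{k-s}$ directly and then read off the kernel dimension from \eqref{ban}. Your layer-by-layer lifting of the homogeneous surjectivity to the filtered one is exactly the inductive mechanism the paper employs in the proof of Theorem~\ref{thm:4}. The one substantive difference is how the homogeneous base case is handled: you factor the bivariate form $p_{(s)}$ into linear forms over $\mathbb C$ and compose surjective directional derivatives, while the paper's second proof of Proposition~\ref{prp:4} (and its $k$-variate version, Proposition~\ref{prp:4k}) instead shows that the coefficient matrix of the resulting linear system has full row rank because its rows are the coefficient vectors of the polynomials $x^{\alpha}p(x)$. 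Your route is shorter and more transparent in two variables but leans essentially on bivariate factorization; the paper's rank argument is what carries over to $k$ variables.
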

Note that in the case $s=0,$ i.e., $p(0,0)\neq 0,$ this result states that the PDE $p(D)f=0$
has no solution except $f=0.$ Of course this statement coincides with Corollary \ref{cor:1}.

\section{\label{1} Main result}
We are going to prove the following
\begin{theorem}\label{thm:4}
	Assume that   $q \in \Pi_m$ and the origin $O=(0,0)$ is an $s$-fold zero of $p \in \Pi.$ Then we have that the PDE
	\begin{equation*}\label{eq:102}
	p(D)f=q
	\end{equation*}
	has a polynomial solution from $\Pi_{m+s}.$
\end{theorem}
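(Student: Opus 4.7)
The plan is to realize $\Lset:\Pi_{m+s}\to\Pi_m$ defined by $\Lset(f)=p(D)f$ as a linear map between finite-dimensional spaces and show it is surjective by a dimension count, using Theorem \ref{thm:2} to pin down the kernel.

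First I would check that $\Lset$ is well-defined, i.e.\ that $p(D)f\in\Pi_m$ whenever $f\in\Pi_{m+s}$. This is immediate: since $O$ is an $s$-fold zero of $p$, every monomial occurring in $p$ has total degree at least $s$, so every partial derivative appearing in $p(D)$ lowers the total degree by at least $s$. Hence $\deg(p(D)f)\le (m+s)-s=m$.

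Next, by formula \eqref{ban} it suffices to check that $\dim\Pi_{m+s}-\dim\ker\Lset=\dim\Pi_m$, because the image sits inside $\Pi_m$ and matching dimensions force it to fill $\Pi_m$. Here one invokes Theorem \ref{thm:2}: the kernel of $p(D)$ restricted to $\Pi_{m+s}$ has dimension $D_{m+s}$, which for $m+s\ge s-1$ equals $\tfrac{s(s+1)}{2}+s(m+1)$, namely the initial block $1+2+\cdots+s$ followed by $m+1$ further copies of $s$ (corresponding to the indices $i=s,\ldots,m+s$). A direct algebraic simplification of
$$\tfrac{1}{2}(m+s+1)(m+s+2)-\tfrac{s(s+1)}{2}-s(m+1)$$
collapses exactly to $\tfrac{1}{2}(m+1)(m+2)=\dim\Pi_m$. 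Thus $\Lset$ is surjective and any $q\in\Pi_m$ has a preimage in $\Pi_{m+s}$.

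The only step that requires care is the application of Theorem \ref{thm:2}: one must note that the kernel of $p(D)$ acting on $\Pi_{m+s}$ with target the full polynomial space coincides with $\ker\Lset$, which is automatic from the inclusion $p(D)\Pi_{m+s}\subseteq\Pi_m$ established above. Aside from this bookkeeping, the result is a routine dimension comparison, and it is essentially the only way the kernel formula of Theorem \ref{thm:2} can yield a tight surjectivity statement.
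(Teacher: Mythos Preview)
Your argument is correct: the map $\Lset:\Pi_{m+s}\to\Pi_m$ is well defined because every monomial in $p$ has degree at least $s$, Theorem~\ref{thm:2} gives $\dim\ker\Lset=D_{m+s}=\tfrac{s(s+1)}{2}+s(m+1)$, and the arithmetic you carry out does yield $\dim\Pi_{m+s}-D_{m+s}=\dim\Pi_m$, forcing surjectivity. The bookkeeping remark about the kernel is also right.

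This is, however, a genuinely different route from the paper's. The paper does not apply Theorem~\ref{thm:2} to $\Pi_{m+s}$ directly; instead it argues by induction on $m$: at each step it peels off the top homogeneous layer by solving $p_{(s)}(D)\hat f=q_{(m)}$ via Proposition~\ref{prp:4} (the purely homogeneous case), then observes that $r:=q-p(D)\hat f$ lies in $\Pi_{m-1}$ and invokes the inductive hypothesis. Your approach is shorter and cleaner in the bivariate setting, since Theorem~\ref{thm:2} already encodes all the needed kernel information. The paper's layer-by-layer induction, on the other hand, is more constructive (it actually builds the solution degree by degree) and, more importantly, is what survives the passage to $k$ variables in Section~3: there the analogue of Theorem~\ref{thm:2} is not invoked, and only the homogeneous surjectivity statement (proved by an explicit rank argument on the linear system) is needed to drive the same induction. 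Your proof would require a $k$-variable version of the exact kernel count, which the paper does not supply.
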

In view of Theorem \ref{thm:2} we get
\begin{corollary}\label{cor:2}
     Assume that  the origin $O=(0,0)$ is an $s$-fold zero of $p \in \Pi$ and $q \in
      \Pi_m.$ Then we have that the solutions of PDE
	\begin{equation}\label{eq:12}
	p(D)f=q,
	\end{equation}
      belonging to $\Pi_{k},\ k\ge m+s,$ form an affine space of dimension $$\sigma = \frac{s(2k-s+1)}{2}.$$
\end{corollary}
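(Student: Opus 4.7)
The plan is to deduce Corollary \ref{cor:2} by combining Theorem \ref{thm:4} with Theorem \ref{thm:2} through the standard coset description of inhomogeneous linear equations. Since $k \ge m+s$, Theorem \ref{thm:4} supplies a particular polynomial solution $f_{0} \in \Pi_{m+s} \subseteq \Pi_{k}$ of \eqref{eq:12}; this handles existence and shows the solution set in $\Pi_{k}$ is non-empty.

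Next, I would observe that $p(D)\colon \Pi_{k} \to \Pi$ is a linear operator between finite-dimensional spaces, so every solution of \eqref{eq:12} in $\Pi_{k}$ differs from $f_{0}$ by an element of $\ker\!\bigl(p(D)|_{\Pi_{k}}\bigr)$. Hence the full solution set is the affine subspace $f_{0} + \ker\!\bigl(p(D)|_{\Pi_{k}}\bigr)$, and its dimension is exactly $\dim \ker\!\bigl(p(D)|_{\Pi_{k}}\bigr)$, which by Theorem \ref{thm:2} equals $D_{k}$, the $k$th partial sum of the series $1+2+\cdots+s+s+s+\cdots$.

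The last step is a direct evaluation: since the hypothesis $k \ge m+s$ forces $k \ge s$, the index lies past the ramp-up $1,2,\ldots,s$, so summing the ramp together with the subsequent plateau of $s$'s gives
$$D_{k} = \tfrac{s(s+1)}{2} + s(k-s) = \tfrac{s(2k-s+1)}{2} = \sigma,$$
as required. No step presents a genuine obstacle; the only thing that calls for a moment of care is keeping the indexing convention of Theorem \ref{thm:2} consistent so that the plateau of $s$'s begins at the right position and the arithmetic lands on $\sigma$ rather than an off-by-constant variant.
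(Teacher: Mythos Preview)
Your proposal is correct and follows exactly the route the paper takes: it derives the corollary from Theorem~\ref{thm:4} (existence of a particular solution $f_0\in\Pi_{m+s}\subseteq\Pi_k$) together with Theorem~\ref{thm:2} (the count $D_k$ of linearly independent homogeneous solutions in $\Pi_k$), using the standard affine description $f_0+\ker\bigl(p(D)|_{\Pi_k}\bigr)$. Your evaluation $D_k=\tfrac{s(s+1)}{2}+s(k-s)=\tfrac{s(2k-s+1)}{2}$ is precisely the arithmetic the paper intends, and your remark about keeping the indexing of the series consistent is the only point deserving care.
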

Thus the solutions of PDE \eqref{eq:12} can be represented as
$$f = f_0 + \sum_{i=1}^{\sigma} \lambda_i f_i,$$
where $f_0\in\Pi_{m+s}$ is a solution of PDE \eqref{eq:12} and  $f_i,\  i=1,...,\sigma,$ are the linearly independent solutions of the homogeneous PDE $p(D)f=0$  in $\Pi_{k}.$

Let us start the consideration with the case when both $p$ and $q$ are homogeneous. Note that the following is a particular case $s=n$ of Theorem \ref{thm:4}.

\begin{proposition}\label{prp:4}
	Assume that $p \in \overline\Pi_n$ and $q \in \overline\Pi_m.$ Then the PDE \newline $p(D)f=q$
	has a solution from $\overline\Pi_{n+m}.$
\end{proposition}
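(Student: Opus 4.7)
The plan is to prove that the restriction $p(D) : \overline\Pi_{n+m} \to \overline\Pi_m$ is surjective. Since $p \in \overline\Pi_n$ is homogeneous of degree $n$, the operator $p(D)$ lowers total degree by exactly $n$, so it sends $\overline\Pi_{n+m}$ into $\overline\Pi_m$ and is a well defined linear map between finite dimensional spaces of dimensions $n+m+1$ and $m+1$ respectively. By the rank-nullity formula \eqref{ban}, surjectivity is equivalent to showing that $\dim \ker (p(D)|_{\overline\Pi_{n+m}}) = n$.

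I would next compute this kernel dimension using Theorem \ref{thm:2} applied with $s=n$, which is legitimate because a nonzero homogeneous polynomial of degree $n$ has the origin as an exact $n$-fold zero. Theorem \ref{thm:2} counts the kernel of $p(D)$ on the filtered space $\Pi_k$, so the key step is to refine that count to the graded piece $\overline\Pi_j$. The bridge is the degree-preserving behaviour of $p(D)$ noted above: if $f = f_{(0)} + \cdots + f_{(k)}$ satisfies $p(D)f = 0$, then each $p(D)f_{(j)} \in \overline\Pi_{j-n}$, and the sum can vanish only if every layer vanishes separately. Consequently $\ker (p(D)|_{\Pi_k})$ splits as the direct sum of the graded kernels $\ker (p(D)|_{\overline\Pi_j})$, which yields
$$\dim \ker (p(D)|_{\overline\Pi_j}) = D_j - D_{j-1} = d_j.$$

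For $j = n+m \ge n$ the series of Theorem \ref{thm:2} has already stabilized at the value $s = n$, so $d_{n+m} = n$. Feeding this back into \eqref{ban} gives an image of dimension $(n+m+1) - n = m+1 = \dim \overline\Pi_m$; the restricted operator is therefore surjective, and the desired $f \in \overline\Pi_{n+m}$ with $p(D)f = q$ exists.

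The only substantive step is the graded refinement in the second paragraph — reducing the filtered count of Theorem \ref{thm:2} to the individual homogeneous layer by exploiting the grading preservation of $p(D)$; once that is in hand, everything else is straightforward dimension arithmetic with the partial sums $D_k$.
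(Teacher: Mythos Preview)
Your argument is correct and matches the paper's first proof: both apply Theorem~\ref{thm:2} with $s=n$, compute $\dim\ker\bigl(p(D)|_{\overline\Pi_{n+m}}\bigr)=D_{n+m}-D_{n+m-1}=n$, and conclude surjectivity of $p(D):\overline\Pi_{n+m}\to\overline\Pi_m$ via \eqref{ban}. The paper leaves the graded splitting of the kernel implicit (you spell it out), and also supplies a second, more hands-on proof by writing the PDE as an explicit linear system in the coefficients of $f$.
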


\begin{proof} Let us verify that in this case the number of linearly independent solutions of PDE $p(D)f=0$ in $\overline\Pi_{n+m}$ is $n.$
	Indeed, according to Theorem \ref{thm:2}, the number of linearly independent solutions of PDE $p(D)f=0$ in $\Pi_{n+m}$ is:
	$$1+2+...+n+mn,$$
	while in $\Pi_{n+m-1}$ is:
	$$1+2+...+n+(m-1)n:$$
	Then consider the linear operator $\Lset:\overline\Pi_{n+m}\rightarrow\overline\Pi_{m}$, given by $\Lset f=p(D)f.$ What we verified above means that $dim(ker\Lset)=n.$  Now, by using the formula \eqref{ban}, we get
	$$dim(Im \Lset) = dim \overline\Pi_{n+m} - dim(ker \Lset)=m + n + 1 - n = m + 1 = dim\overline\Pi_{m}.$$
	Therefore the operator $\Lset$ is on $\overline\Pi_{n+m}$ and the equation \eqref{eq:12} has a solution.
\end{proof}

\begin{proof}[Second proof of Proposition \ref{prp:4}]
By using Theorem \ref{thm:2}, we proved Proposition \ref{prp:4}, which states the polynomial solability of the PDE $p(D)f=q$ in the case when $p$ and $q$ are homogeneous polynomials. Now, let us establish the same result in other way, which will be important in the proof of the result in the case of polynomials of more variables.

Assume that $p \in \overline\Pi_n$ and $q \in\overline \Pi_m:$
$$p(x, y) = a_{0}x^{n} + a_{1}x^{n-1}y + ... + a_{n}y^{n},$$
$$q(x, y) = b_{0}x^{m} + b_{1}x^{m-1}y + ... + b_{m}y^{m}.$$
Let us find a solution $f\in\overline\Pi_{n+m}$ of the PDE
\begin{equation}\label{eq:17}
	p(D)f=q.
	\end{equation}
   Suppose that
$$f(x, y) = \gamma_{0}x^{n+m} + \gamma_{1}x^{n+m-1}y +\cdots + \gamma_{n+m}y^{n+m}.$$
Now, the PDE \eqref{eq:17} looks like
$$(a_{0}x^{n} + a_{1}x^{n-1}y + ... + a_{n}y^{n})(D)(\gamma_{0}x^{n+m} + \gamma_{1}x^{n+m-1}y + ... + \gamma_{n+m}y^{n+m}) $$
$$= b_{0}x^{m} + b_{1}x^{m-1}y + ... + b_{m}y^{m}$$

By equating the coefficients of $x^m, x^{m-1}y,\ldots, y^{m}$ in the left and right hand sides of the equation we get
$$\frac{(n+m)!0!}{m!0!} a_{0}\gamma_{0} + \frac{(n+m-1)!1!}{m!0!} a_{1}\gamma_{1} + ... + \frac{m!n!}{m!0!} a_{n}\gamma_{n} = b_{0},$$
$$\frac{(n+m-1)!1!}{(m-1)!1!} a_{0}\gamma_{1} + \frac{(n+m-2)!2!}{(m-1)!1!} a_{1}\gamma_{2} + ... + \frac{(m-1)!(n+1)!}{(m-1)!1!} a_{n}\gamma_{n+1} = b_{1},$$
$$\vdots$$
$$\frac{n!m!}{0!m!} a_{0}\gamma_{m} + \frac{(n-1)!(m+1)!}{0!m!} a_{1}\gamma_{m+1} + ... + \frac{0!(n+m)!}{0!m!} a_{n}\gamma_{n+m} = b_{m},$$
respectively.

If $a_{0} \neq 0$, then $\gamma_{m+1}, \gamma_{m+2}, ..., \gamma_{m+n},$ are $n$ free variables in above linear system. Thus the solutions of system form an affine space of dimension $n.$

If $a_{0}=0$ and $a_{1} \neq 0$, then it is easily seen that $\gamma_{0}$ becomes a free variable instead of $\gamma_{m+1}$ and again we have $n$ free variables.

In the general case, when $a_{0}=a_1=\cdots a_{k-1}=0$ and $a_{k} \neq 0,$ the
$n$ free variables are $\gamma_{0},\ldots,\gamma_{k-1}$ and $\gamma_{m+k+1},\ldots,\gamma_{m+n}.$
\end{proof}

Thus in the above considered case, when $p \in \overline\Pi_n$ and $q \in \overline\Pi_m,$ we get another proof of the fact that the solutions of the PDE \eqref{eq:17} form an affine space of dimension $n.$

Now consider a particular case $s=0$ of Theorem \ref{thm:4} and Corollary \ref{cor:2}:
\begin{proposition}\label{prp:5}
	Suppose that $p \in \Pi_n,\ q \in \Pi_m$ and the free term of $p$ is not zero: $a_{00} \neq 0.$  Then we have that the PDE
\begin{equation}\label{eq:14}
	p(D)f=q
	\end{equation}	
	has a unique solution $f_0\in\Pi_m.$ The polynomial $f_0$ is the only solution of the PDE \eqref{eq:14} also in each $\Pi_{k},\ k\ge m.$
\end{proposition}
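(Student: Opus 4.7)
The plan is to observe that when $a_{00}\neq 0$, the differential operator $p(D)$ preserves the space $\Pi_m$ and is injective there; the proposition then follows from a dimension count together with Corollary \ref{cor:1}.

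First I would verify that $p(D)$ maps $\Pi_m$ into $\Pi_m$. Indeed, for any monomial $a_{ij}\partial_x^i\partial_y^j$ appearing in $p(D)$, applying it to a polynomial of total degree at most $m$ yields a polynomial of total degree at most $m-(i+j)\le m$ (or zero if $i+j>m$). Hence the linear operator $\Lset:\Pi_m\to \Pi_m$ given by $\Lset f=p(D)f$ is well defined, and its source and target have the same (finite) dimension $\tfrac{1}{2}(m+1)(m+2)$.

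Next I would compute $\ker \Lset$. A polynomial $f\in \Pi_m$ lies in $\ker \Lset$ precisely when it is a polynomial solution of the homogeneous equation $p(D)f=0$. Since $a_{00}\neq 0$, Corollary \ref{cor:1} (equivalently, Theorem \ref{thm:2} with $s=0$) asserts that the only polynomial solution of $p(D)f=0$ is the zero polynomial. Thus $\ker\Lset=\{0\}$, i.e.\ $\Lset$ is injective. By \eqref{ban}, $\dim(\mathrm{Im}\,\Lset)=\dim\Pi_m$, so $\Lset$ is surjective as well. Therefore there exists a unique $f_0\in\Pi_m$ with $p(D)f_0=q$.

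Finally, for the last assertion, I would argue: suppose $g\in\Pi_k$ with $k\ge m$ satisfies $p(D)g=q$. Then $p(D)(g-f_0)=0$, so $g-f_0$ is a polynomial solution of the homogeneous equation; by Corollary \ref{cor:1} it vanishes, so $g=f_0$. There is no real obstacle in this proof: the only substantive point is the invariance of $\Pi_m$ under $p(D)$, which reduces the problem to elementary finite-dimensional linear algebra once Corollary \ref{cor:1} is invoked.
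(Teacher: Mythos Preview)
Your proof is correct and follows essentially the same route as the paper: define $\Lset:\Pi_m\to\Pi_m$ by $\Lset f=p(D)f$, use Corollary~\ref{cor:1} to get $\ker\Lset=\{0\}$, apply \eqref{ban} to conclude surjectivity, and handle uniqueness in $\Pi_k$ by subtracting two solutions and invoking Corollary~\ref{cor:1} again. The only addition is your explicit check that $p(D)$ preserves $\Pi_m$, which the paper leaves implicit.
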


\begin{proof}
		Since $p(D)1=a_{00} \neq 0$, we get from Corollary \ref{cor:1}, that the PDE $p(D)f=0$ has a unique polynomial solution $f=0$:
	
	Then consider the linear operator $\Lset :\Pi_m \rightarrow \Pi_m$ given by $\Lset f=p(D)f.$ Above we verified that $ker\Lset=\{0\}$: Now, by using the formula \eqref{ban}, we get that
	$$dim(Im\Lset) = dim\Pi_m - dim(ker\Lset) = dim\Pi_m.$$
	Thus the equation \eqref{eq:14} has a unique solution in $\Pi_m$ denoted by $f_1.$
	
	Now, concerning the other polynomial spaces, assume by way of contradiction, that the equation \eqref{eq:14} has another solution, denoted by $f_2,\ f_2\neq f_1,$ in $\Pi_k,$ where $k>m.$
	Then we get that $f_1-f_2$ is a nonzero polynomial solution of the PDE $p(D)f=0,$ which contradicts Corollary \ref{cor:1}.
\end{proof}

Now we are in a position to present

\begin{proof}[Proof of Theorem  \ref{thm:4}]
Let us use induction on $m.$  As the first step of induction consider the case $m=-1,\ q \in\Pi_{-1},$ i.e., $q=0$  (the next step is $m=0,\ q\in\Pi_{0},$ i.e., $q=const.\neq 0).$
In this case the PDE $p(D)f=0$ has a polynomial solution $0\in\Pi_{m+s},\ \forall s\ge 0.$

Now assume that the PDE
\begin{equation}\label{eq:16}
	p(D)f=q
	\end{equation}	
has a solution if $q\in\Pi_{m+s-1}.$ Let us prove that it has a solution assuming that $q\in\Pi_{m+s}.$
Now assume that the lower homogeneous layer of $p$ is the $s$th layer: $p_\downarrow=p_{(s)}.$

	Next assume that $f\in\Pi_{m+s}$ and  $f^\uparrow=f_{(m+s)}$ is the upper homogeneous layer of $f.$ Note that the case $f_{(m+s)}=0$ follows from the induction hypothesis.
	
	Consider the following PDE with homogeneous polynomials $p_{(s)}$ and $q_{(m)}:$
		\begin{equation*}\label{eq:9}
	p_{(s)}(D)f = q_{(m)}.
	\end{equation*}
	According to Theorem \ref{prp:4} this equation has a solution $\hat f\in\overline\Pi_{m+s}:$
	\begin{equation}\label{eq:19}
	p_{(s)}(D)\hat f = q_{(m)}.
	\end{equation}
	Next let us seek for a solution of  the PDE \eqref{eq:16} in the form
	$f=g+\hat f:$
		\begin{equation}\label{eq:10}
	p(D)(g+\hat f) = q.
	\end{equation}
	It is easily seen that, in view of \eqref{eq:19}, this PDE is equivalent to
	\begin{equation}\label{eq:11}
	p(D)(g) = r,
	\end{equation}
	where $r=q - p(D)(\hat f).$
	
	Note that $p_\downarrow=p_{(s)}$ readily implies that $p(D)(\hat f)\in\Pi_m.$ Since also $r\in\Pi_m$ we obtain that $r\in\Pi_m$ too.
	
	Next let us verify that $r\in \Pi_{m-1}.$ It is enough to show that $r_{(m)}=0.$  To this purpose, by using \eqref{eq:19}, we obtain that
	
	$r_{(m)}=q_{(m)} - [p(D)(\hat f)]_{(m)}=q_{(m)} - p_{(s)}(D)(\hat f)=q_{(m)} -q_{(m)} =0.$

	Now, in view of $r\in\Pi_{m-1},$ let us use the induction hypothesis. Hence, we get    that the PDE \eqref{eq:11} has a solution denoted by $g_0,$ where $g_0\in \Pi_{m+s-1}.$ Therefore $f=g_0+\hat f$ is a solution of the PDE \eqref{eq:16}. Note also that this implies that $\hat f=f_{m+s}.$
\end{proof}

\section{The case of more than two variables}

Let us use stamdard multivariate notation.

For $x=(x_1,\ldots,x_k)\in \mathbb C^k$ and $\alpha=(\alpha_1,\ldots,\alpha_k\in mathbb Z_+^k$ denote
$$x^\alpha=x_1^{\alpha_1}\cdots x_k^{\alpha_k}, \quad |\alpha|=\alpha_1+\cdots+\alpha_k,\quad  \alpha!=\alpha_1!\cdots\alpha_k!.$$

Denote by $\Pi_n^{(k)}$ the space polynomials of $k$ variables of total degree at most $n,$ 
$$\Pi_n = \{\sum_{|\alpha|\le n}a_\alpha x^\alpha\},\quad\dim\Pi_n^{(k)}=\binom{n+k}{k}.$$

We set $$\Pi^{(k)}=\cup_{n\ge 0}\Pi_n^{(k)}.$$

Denote also by $\overline\Pi_n^{(k)}$ the space homogeneous polynomials of total degree $n,$ for which we have that
$$\overline\Pi_n = \{\sum_{|\alpha|= n}a_\alpha x^\alpha\},\quad\dim\overline\Pi_n^{(k)}=\binom{n+k-1}{k-1}.$$

The following result holds for polynomials of $k$ variables:
\begin{theorem}\label{thm:4}
	Assume that $q \in \Pi_m^{(k)}$ and the origin $O=(0,\ldots,0)$ is an $s$-fold zero of $p \in \Pi^{(k)}.$ Then we have that the PDE
	\begin{equation*}\label{eq:102}
	p(D)f=q
	\end{equation*}
	has a polynomial solution from $\Pi_{m+s}^{(k)}.$
\end{theorem}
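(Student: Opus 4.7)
The plan is to mirror the two-stage structure of the bivariate argument given above. First one establishes the $k$-variable analog of Proposition \ref{prp:4}: if $p \in \overline\Pi_n^{(k)}$ is nonzero and $q \in \overline\Pi_m^{(k)}$, then the PDE $p(D)f=q$ is solvable in $\overline\Pi_{n+m}^{(k)}$. Once this homogeneous step is in hand, the induction on $m = \deg q$ from the bivariate proof of Theorem \ref{thm:4} transfers essentially verbatim.

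The paper's remark that its second proof of Proposition \ref{prp:4} was written with this generalization in mind suggests handling the homogeneous multivariate case by induction on $k$, with the bivariate case supplying the base. For the inductive step, separate off the last variable by writing
$$p(x',x_k) = \sum_{i=0}^n x_k^i\,\tilde p_i(x'), \qquad q(x',x_k) = \sum_{i=0}^m x_k^i\,\tilde q_i(x'),$$
with $\tilde p_i \in \overline\Pi_{n-i}^{(k-1)}$, $\tilde q_i \in \overline\Pi_{m-i}^{(k-1)}$, and seek $f = \sum_{i=0}^{n+m} x_k^i\,\tilde f_i(x')$ with $\tilde f_i \in \overline\Pi_{n+m-i}^{(k-1)}$. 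Equating the coefficients of each power $x_k^\ell$ in $p(D)f=q$ produces, for $0\le\ell\le n+m-j$, the system
$$\sum_{i=j}^{n} \frac{(\ell+i)!}{\ell!}\,\tilde p_i(D_{x'})\,\tilde f_{\ell+i}(x') = \tilde q_\ell(x'),$$
where $j$ is the smallest index with $\tilde p_j\neq 0$ and $\tilde q_\ell$ is set to $0$ for $\ell>m$. Solve this system downward, from $\ell = n+m-j$ to $\ell=0$: at each stage $\tilde f_{\ell+j}$ is determined by a PDE of the form $\tilde p_j(D_{x'})\,\tilde f_{\ell+j} = (\text{known homogeneous polynomial in }x')$, which is solvable in $\overline\Pi_{n+m-\ell-j}^{(k-1)}$ by the $(k-1)$-variable inductive hypothesis since $\tilde p_j$ is a nonzero element of $\overline\Pi_{n-j}^{(k-1)}$. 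The remaining coefficients $\tilde f_0,\ldots,\tilde f_{j-1}$ play the role of the free parameters in the second proof of Proposition \ref{prp:4} and may simply be set to zero.

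With the homogeneous step available, the induction on $m$ proceeds exactly as in the bivariate proof. Setting $p_{(s)} = p_\downarrow$, use the homogeneous result to pick $\hat f \in \overline\Pi_{m+s}^{(k)}$ with $p_{(s)}(D)\hat f = q_{(m)}$, and set $r := q - p(D)\hat f$. Because $p_\downarrow = p_{(s)}$, the polynomial $p(D)\hat f$ lies in $\Pi_m^{(k)}$ with $m$th homogeneous layer equal to $q_{(m)}$, so $r \in \Pi_{m-1}^{(k)}$; the inductive hypothesis on $m$ yields $g \in \Pi_{m+s-1}^{(k)}$ with $p(D)g = r$, and $f := g + \hat f \in \Pi_{m+s}^{(k)}$ solves the PDE.

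The main obstacle is the homogeneous step, where the bookkeeping around the leading operator $\tilde p_j(D_{x'})$---now itself a nontrivial differential operator in $k-1$ variables rather than a scalar---must be arranged so that each recursive sub-PDE lies in the form covered by the inductive hypothesis and the homogeneity of the unknowns $\tilde f_i$ is preserved at every step. A cleaner alternative that avoids the recursion altogether is duality under the apolar pairing $\langle f,g\rangle := (f(D)g)(0)$ on $\overline\Pi_d^{(k)}$: a direct calculation on monomials shows that the multiplication map $M_p:\overline\Pi_m^{(k)}\to\overline\Pi_{m+n}^{(k)}$ is adjoint to $p(D):\overline\Pi_{n+m}^{(k)}\to\overline\Pi_m^{(k)}$, and since $M_p$ is injective (the polynomial ring being an integral domain), its adjoint $p(D)$ is surjective, which immediately gives the homogeneous case.
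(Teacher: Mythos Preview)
Your overall architecture---reduce to the homogeneous case (the $k$-variable analogue of Proposition~\ref{prp:4}) and then run the induction on $m=\deg q$ exactly as in the bivariate proof---matches the paper, and your induction on $m$ is word-for-word the paper's. For the homogeneous step, however, your two proposed routes relate to the paper differently. Your ``cleaner alternative'' via the apolar pairing is in fact the paper's own argument, phrased coordinate-free: the paper writes the linear system for the unknown coefficients $\gamma_\alpha$ of $f$, rescales to $\gamma'_\alpha=\alpha!\,\gamma_\alpha$, and observes that the row indexed by $\alpha_0$ is exactly the coefficient vector of $x^{\alpha_0}p(x)$; linear independence of these rows is injectivity of $M_p$, and the rescaling is precisely what makes $M_p$ the transpose of $p(D)$. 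By contrast, your primary route---induction on the number of variables $k$, peeling off $x_k$ and solving a triangular cascade of $(k{-}1)$-variable homogeneous PDEs via the inductive hypothesis---is a genuinely different (and correct) argument that the paper does not use. Your reading of the paper's hint is slightly off: the second proof of Proposition~\ref{prp:4} was meant to foreshadow the direct full-rank linear-system computation in $k$ variables (i.e., your apolar/duality argument), not an induction on $k$. The induction on $k$ buys a fully constructive solution without any rank argument, at the price of heavier bookkeeping; the paper's (equivalently, your duality) approach is shorter and immediately yields the count $\binom{n+m+k-1}{k-1}-\binom{m+k-1}{k-1}$ of free parameters.
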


It is clear from the previous section that all we need is  to generalize Proposition \ref{prp:4} to the case of $k$ variables:
\begin{proposition}\label{prp:4k}
	Assume that $p \in \overline\Pi_n^{(k)}$ and $q \in \overline\Pi_m^{(k)}.$ Then the PDE
	\begin{equation} p(D)f=q\label{eq:99}
	\end{equation}
	has a solution from $\overline\Pi_{n+m}^{(k)}.$
\end{proposition}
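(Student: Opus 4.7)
The plan is to adapt the second proof of Proposition~\ref{prp:4}: write the PDE at the level of coefficients and exhibit an invertible square subsystem by choosing an appropriate set of columns. With
\begin{equation*}
p = \sum_{|\alpha|=n} a_\alpha x^\alpha,\qquad q = \sum_{|\beta|=m} b_\beta x^\beta,\qquad f = \sum_{|\gamma|=n+m} c_\gamma x^\gamma,
\end{equation*}
the identity $D^\alpha x^\gamma = \gamma!/(\gamma-\alpha)!\, x^{\gamma-\alpha}$ together with the substitution $\gamma=\alpha+\beta$ turns $p(D)f=q$ into the linear system
\begin{equation*}
\sum_{|\alpha|=n} a_\alpha \frac{(\alpha+\beta)!}{\beta!}\, c_{\alpha+\beta} = b_\beta,\qquad |\beta|=m,
\end{equation*}
of $\binom{m+k-1}{k-1}$ equations in $\binom{n+m+k-1}{k-1}$ unknowns. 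Surjectivity of $p(D):\overline\Pi_{n+m}^{(k)}\to\overline\Pi_m^{(k)}$ thus reduces to showing that this coefficient matrix has full row rank.

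Let $\prec$ denote lexicographic order on $\mathbb{Z}^k$---it is total and translation-invariant---and let $\alpha_0$ be the $\prec$-largest multi-index with $|\alpha_0|=n$ and $a_{\alpha_0}\neq 0$ (such an $\alpha_0$ exists because $p\neq 0$). Select the $\binom{m+k-1}{k-1}$ columns indexed by $\gamma=\alpha_0+\beta'$ as $\beta'$ ranges over multi-indices with $|\beta'|=m$; these are pairwise distinct. In the resulting square submatrix $M'$, the entry at row $\beta$ and column $\beta'$ is
\begin{equation*}
M'_{\beta,\beta'} = a_{\alpha_0+\beta'-\beta}\,\frac{(\alpha_0+\beta')!}{\beta!},
\end{equation*}
interpreted as $0$ whenever $\alpha_0+\beta'-\beta$ fails to be a nonnegative multi-index.

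Now order both rows and columns of $M'$ by $\prec$. On the diagonal, $\beta'=\beta$ gives $a_{\alpha_0}(\alpha_0+\beta)!/\beta!\neq 0$. When $\beta'\succ\beta$, nonvanishing of $M'_{\beta,\beta'}$ would force $a_{\alpha_0+\beta'-\beta}\neq 0$, and maximality of $\alpha_0$ would then yield $\alpha_0+\beta'-\beta\preceq\alpha_0$; by translation invariance this simplifies to $\beta'\preceq\beta$---a contradiction. Hence $M'$ is triangular with nonzero diagonal, therefore invertible, and the full system admits a solution. The main obstacle is really the choice of monomial order: it must both single out a unique leading index $\alpha_0$ in the support of $p$ and behave well under the shift by $\alpha_0$. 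Lexicographic order achieves both, and with this choice the $k$-variable argument specializes directly to the bivariate second proof of Proposition~\ref{prp:4}, with $\alpha_0$ playing the role there of the first multi-index $(n-k,k)$ at which $p$ is nonvanishing.
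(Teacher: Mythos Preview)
Your argument is correct, but the paper proves full row rank by a different and somewhat slicker device. After writing down the same linear system, the paper rescales the unknowns via $\gamma'_\gamma=\gamma!\,c_\gamma$, which turns the system into
\[
\sum_{|\alpha|=n} a_\alpha\,\gamma'_{\alpha+\beta}=\beta!\,b_\beta,\qquad |\beta|=m.
\]
Now the row indexed by $\beta$ is exactly the coefficient vector (in $\overline\Pi_{n+m}^{(k)}$) of the polynomial $x^\beta p(x)$. A linear dependence among the rows would give a nonzero $r\in\overline\Pi_m^{(k)}$ with $r(x)p(x)=0$, impossible since the polynomial ring is an integral domain and $p\neq 0$. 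So full rank follows in one line, without choosing a monomial order or an explicit minor.

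Your approach has its own merits: it is constructive, pinning down an explicit set of $\binom{n+m+k-1}{k-1}-\binom{m+k-1}{k-1}$ free variables (those $c_\gamma$ with $\gamma$ not of the form $\alpha_0+\beta'$), and it is the genuine $k$-variable extension of the paper's \emph{second} bivariate proof of Proposition~\ref{prp:4}, where the role of your lexicographic leading index $\alpha_0$ is played by the first index at which a coefficient of $p$ is nonzero. The paper's multivariate proof, by contrast, abandons that explicit bookkeeping in favor of the integral-domain observation.
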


\begin{proof} 
Assume that $p \in \overline\Pi_n$ and $q \in\overline \Pi_m:$
$$p(x)=\sum_{|\alpha|= n}a_\alpha x^\alpha,$$
$$q(x)=\sum_{|\alpha| = m}b_\alpha x^\alpha.$$
Let us find a solution $f\in\overline\Pi_{n+m}$ of the PDE
\begin{equation}\label{eq:17}
	p(D)f=q.
	\end{equation}
   Suppose that
$$f(x)=\sum_{|\alpha|= m+n}\gamma_\alpha x^\alpha.$$
Now, the PDE \eqref{eq:17} looks like

$$\sum_{|\alpha|= n}a_\alpha x^\alpha(D)\sum_{|\alpha|= m+n}\gamma_\alpha x^\alpha= \sum_{|\alpha|= m}b_\alpha x^\alpha.$$

By equating the terms with $x^{\alpha_0}, \ |\alpha_0|\le m,$ in the left and right hand sides of the equation, we get that

$$\sum_{|\alpha|= n}a_\alpha x^\alpha(D)\gamma_{\alpha+\alpha_0} x^{\alpha+\alpha_0}= b_{\alpha_0} x^{\alpha_0}.$$

For the respective coefficients we have that
\begin{equation}\label{eq:100}\sum_{|\alpha|= n}(\alpha+\alpha_0)!a_\alpha  \gamma_{\alpha+\alpha_0}= \alpha_0! b_{\alpha_0}.\end{equation}

By the change of unknowns $\gamma'_\alpha=\alpha!\gamma_\alpha$ we get the linear system
$$\sum_{|\alpha|= n}a_\alpha  \gamma_{\alpha+\alpha_0}'= \alpha_0! b_{\alpha_0} .$$
Let us verify that the main matrix of this system has full rank.

Note that in the row of the matrix with $\alpha_0=(0,\ldots,0)$ we have the coefficients of the polynomial $p(x),$ while in the row with any $\alpha_0,$ where $|\alpha_0|\le m,$ we have the coefficients of the polynomial $x^\alpha p(x).$ 

Thus if the rows of the main matrix are linearly dependent then so are the polynomials 
$x^\alpha p(x),\ |\alpha_0|\le m,$ which evidently takes place only if $p(x)\equiv 0.$

Therefore we have that the linear system \eqref{eq:100} also is a full rank linear system, where we have
$\binom{m+k-1}{k-1}$ equations and  $\binom{n+m+k-1}{k-1}$ unknowns.

Thus we have that the system is consistent. Moreover, in the solution of the linear system we have exactly
$$ \sigma(n,m,k):=\binom{n+m+k-1}{k-1}-\binom{m+k-1}{k-1}$$
free variables.
\end{proof}

\renewcommand{\refname}{REFERENCES}

\end{document}